\documentclass[a4paper, 12pt]{article}

\usepackage{mathrsfs}
\usepackage{epsfig}
\usepackage{amsmath}
\usepackage{amssymb}
\usepackage{latexsym}
\usepackage{amsfonts}
\usepackage{amsthm}

\usepackage[numbers,sort&compress]{natbib}
\usepackage{graphicx}
\ifx\pdfoutput\undefined \DeclareGraphicsRule{*}{eps}{*}{} \else
\DeclareGraphicsRule{*}{mps}{*}{} \fi

\usepackage[centerlast]{caption2}

\usepackage{color}

\linespread{1.5}

\usepackage{txfonts}
\usepackage{amssymb}
\usepackage{mathrsfs}
\usepackage{amsfonts}

\newtheorem{theorem}{Theorem}[section]
\newtheorem{lemma}[theorem]{Lemma}
\newtheorem{cor}[theorem]{Corollary}

\newtheorem{conj}[theorem]{Conjecture}

\textwidth=6.25in \textheight=9.0in \topmargin=-10pt
\evensidemargin=10pt \oddsidemargin=10pt \headsep=25pt
\parskip=10pt
  
\usepackage{latexsym,amssymb}
\usepackage{amsmath}

\begin{document}

\title{ On the Erd\H{o}s-Ginzburg-Ziv invariant and zero-sum Ramsey number for intersecting families\footnote{The research is
supported by NSFC (11001035, 11271207) and Science and Technology Development Fund of Tianjin Higher Institutions (20121003).}}
\author{
Haiyan Zhang$^a$ \ \ \ \ \ \ Guoqing Wang$^b$\thanks{Corresponding author's E-mail :
gqwang1979@yahoo.com.cn}\\
{\small $^a$Department of Mathematics,} \\
{\small Harbin University of Science and Technology, Harbin, 150080, P. R. China}\\
{\small $^b$Department of Mathematics,} \\
{\small Tianjin Polytechnic University, Tianjin, 300387, P. R. China}\\
}
\date{}
\maketitle

\begin{abstract} Let $G$ be a finite abelian group, and let $m>0$ with $\exp(G)\mid m$.
Let $s_{m}(G)$ be the generalized Erd\H{o}s-Ginzburg-Ziv invariant which denotes the  smallest positive integer $d$ such that any sequence of elements in $G$ of length $d$ contains a subsequence of length $m$ with sum zero in $G$. For any integer $r>0$, let $\mathcal{I}_m^{(r)}$ be the collection of all $r$-uniform intersecting families of size $m$. Let $R(\mathcal{I}_m^{(r)},G)$ be the smallest positive integer $d$ such that any $G$-coloring of the edges of the complete $r$-uniform hypergraph  $K_{d}^{(r)}$ yields a zero-sum copy of some intersecting family in $\mathcal{I}_m^{(r)}$. Among other results, we mainly prove that  $\Omega(s_{m}(G))-1\leq R (\mathcal{I}_{m}^{(r)}, \  G)\leq \Omega(s_{m}(G)),$ where $\Omega(s_{m}(G))$ denotes the least positive integer $n$ such that  ${n-1 \choose  r-1}\geq s_{m}(G)$, and we show that if $r\mid \Omega(s_{m}(G))-1$ then $R (\mathcal{I}_{m}^{(r)}, \  G)= \Omega(s_{m}(G))$.
\end{abstract}

\noindent {\sl Key Words}: {\small Zero-sum Ramsey number; Erd\H{o}s-Ginzburg-Ziv invariant; Intersecting family; Hyperstar; Delta-system}

\section{Introduction}

Erd\H{o}s, Ginzburg and Ziv \cite{EGZ} in 1961 proved the following famous result which is called Erd\H{o}s-Ginzburg-Ziv theorem later.

\noindent\textbf{Throrem A.} \ (Erd\H{o}s-Ginzburg-Ziv). \ {\sl Let $a_1,a_2,\ldots,a_{2n-1}$ be a sequence of $2n-1$ residues modulo $n$. Then there exist $n$ indices $1\leq i_1< i_2<\cdots<i_n\leq 2n-1$ such that
$$a_{i_1}+a_{i_2}+\cdots+a_{i_n}\equiv 0\pmod n.$$ }
Shortly after this, H. Davenport \cite{Davenport} in 1966 posed the problem to determine the smallest positive integer $d$ for any finite abelian group $G$, which is called the Davenport constant $D(G)$, such that every sequence of elements in $G$ of length at least $d$ contains a nonempty subsequence with sum zero in $G$. The Erd\H{o}s-Ginzburg-Ziv theorem and  Davenport constant are the starting point for much research later, which  has been developed into a branch, called zero-sum theory (see \cite{GaoGeroldingersurvey} for a survey), in Combinatorial Number Theory. W.D. Gao \cite{Gaofundamental96} in 1996 find a connection between the Erd\H{o}s-Ginzburg-Ziv theorem and Davenport constant.

\noindent\textbf{Throrem B.} \  (Gao). \ {\sl Let $G$ be a finite abelian group, and let $S$ be a sequence of elements in $G$ of length $|G|+D(G)-1$.  Then $S$ contains a subsequence of length $|G|$ with sum zero.}

 Moreover, Gao \cite{Gao03} go a further step by introducing the zero-sum invariant  $s_{k \exp(G)}(G)$ for any finite abelian group $G$.
For any integer $k>0$, let
$s_{k \exp(G)}(G)$ be the smallest positive integer $d$ such that any sequence $S$ of elements in $G$ of length $d$ contains a subsequence of length $k  \exp(G)$ of sum zero, where $\exp(G)$ is the exponent of $G$. For $k=1$, the invariant $s_{\exp(G)}(G)$ (abbreviated to $s(G)$) is called the Erd\H{o}s-Ginzburg-Ziv invariant. The generalized Erd\H{o}s-Ginzburg-Ziv invariant $s_{k \exp(G)}(G)$  has been studied extensively recently (see \cite{AdhikariRath03,Gao03,GaoHan,GaoThangadurai06,Kubertin}). For the special case $k=1$, the Erd\H{o}s-Ginzburg-Ziv invariant $s(G)$ was studied in a huge of papers (see \cite{GaoGeroldingersurvey,Geroldinger06,GeroldingerRuzsa} for a survey).

Almost meanwhile, some people made a study of zero-sum problems connecting with Ramsey theory. A. Bialostocki and P. Dierker \cite{BialostockiDierker92} in 1992 raised the following interesting variant of the classical Ramsey Theorem: Let $H$ be a graph with $m$ edges and let $t\geq 2$ be an integer with $t\mid m$, and let $\mathbb{Z}_t$ be the cyclic group of order $t$. Define $R(H,\mathbb{Z}_t)$ to be the smallest positive integer $d$ such that for every $\mathbb{Z}_t$-coloring of the edges of the complete graph $K_d$, i.e., a function $c:E(K_d)\rightarrow \mathbb{Z}_t$, there exists in $K_d$ a copy of $H$ such that
$$\sum\limits_{e\in E(H)}c(e)\equiv 0\pmod t.$$ In the same paper, among other results Bialostocki and Dierker determined the precise value of $R(K_{1,m}, \mathbb{Z}_m)$ for star $K_{1,m}$. Very soon after this, Y. Caro \cite{Carostar92} determined the exact value for the zero-sum Ramsey number $R(K_{1,m}, \mathbb{Z}_t)$  for all $t\mid m$, which is stated as follows.

\noindent\textbf{Throrem C.} \ (Bialostocki-Dierker-Caro). \ {\sl Let $m\geq t\geq 2$ be positive integers with $t\mid m$. Then
$$\begin{array}{llll}R (K_{1,m}, \  \mathbb{Z}_t)=\left\{\begin{array}{llll}
               m+t-1,  & \mbox{if \ \ } {m\equiv t\equiv 0 \pmod 2;}\\
              m+t,  & \mbox{if \ \ } otherwise.\\
              \end{array}
              \right.
\end{array}$$
It is worth mentioning that Bialostocki and Dierker also made a study of the zero-sum Ramsey number for matching in hypergraphs (see \cite{BialostockiDierker92}). To generalize the previous results,
Caro \cite{Caro91} and together with Provstgaard \cite{CaroProvstgaard99} studied the zero-sum Ramsey number concerned with a more general combinatorial objects called delta-system in hypergraph setting.
A collection of $r$-sets $e_1,e_2,\ldots,e_m$ is called a {\bf delta-system} provided that there exists a set $Q$ of cardinality $q$, $q<r$, such that $e_i\cap e_j=Q$ for any $1\leq i<j\leq m$. The delta-system $e_1,e_2,\ldots,e_m$ is said to be of type {\bf $S(r,q,m)$}. For $r>q\geq 0$ and $m\geq t\geq 2$ with $t\mid m$, define $R(S(r,q,m), \mathbb{Z}_t)$ to be the smallest positive integer $d$ such that for every $\mathbb{Z}_t$-coloring of the edges of the complete r-uniform hypergraph $K_d^{(r)}$ of $d$ vertices, i.e., a function $c:E(K_d^{(r)})\rightarrow \mathbb{Z}_t$, there exists a delta-system, say $e_1,e_2,\ldots, e_m$, of type $S(r,q,m)$ with each $e_i\in E(K_d^{(r)})$ and
$$\sum\limits_{i=1}^m c(e_i)\equiv 0\pmod t.$$
In \cite{Caro91,CaroProvstgaard99}, Caro and Provstgaard proved the following theorem on zero-sum Ramsey number for delta-systems.

\noindent\textbf{Throrem D.} \ (Caro-Provstgaard). \ {\sl Let $r>q\geq 0$ and $m\geq t\geq 2$ be given integers with $t\mid m$. Then
$$(r-q)m+\max(q,t-1)\leq R(S(r,q,m), \mathbb{Z}_t)\leq (r-q)m+t+q-1.$$
Besides the above Theorem C and Theorem D, more researches (see \cite{AlonCaro93,BialostockiDierker94,CaroYuster94} etc.) were done on zero-sum Ramsey number in the graph or hypergraph setting.

However, all of researches done are concerned with only the $\mathbb{Z}_t$-coloring for some cyclic group $\mathbb{Z}_t$. In this paper, we shall try to generalize the previous results by considering the zero-sum Ramsey number with a
general finite abelian group coloring.

The notations and terminologies on hypergraphs used in this paper will be consistent with  \cite{Bergebook}. For convenience, we give some necessary ones. Let $V=\{v_1,v_2,\ldots,v_n\}$ be a finite set. A {\sl hypergraph} on $V$ is a family $E=(e_1,e_2,\ldots,e_m)$ of subsets of $V$ such that $$e_i\neq \emptyset  \ \ (i=1,2,\ldots,m)$$ and $$\cup_{i=1}^m e_i=V.$$ We denote by $\mathcal{H}=(V; E)$ the hypergraph with vertex set $V=V(\mathcal{H})$ and edge set $E=E(\mathcal{H})$. We call $n=|V|$ and $m=|E|$  the order and the size of the hypergraph $\mathcal{H}$, respectively. For a set $J\subseteq \{1,2,\ldots,m\}$, we call the family $\mathcal{H}'=(e_j: j\in J)$ the partial hypergraph generated by the set $J$. We say the hypergraph $\mathcal{H}$ is $r$-uniform provided that all the edges of $\mathcal{H}$ have cardinality $r$, i.e., $|e_1|=|e_2|=\cdots=|e_m|=r$.   Given the hypergraph $\mathcal{H}$,  we define an {\bf intersecting family} to be a set of edges having nonempty pairwise intersection.
A {\sl hypermatching} in $\mathcal{H}$ is a family of pairwise disjoint edges. The hypergraph $\mathcal{H} = (V; E)$ is said to be a
{\bf hyperstar} if $\cap_{i=1}^{m} e_i\neq \emptyset$.  For any vertex $v\in V$, define $$\mathcal{H}(v)=(e_j: e_j\in E, v\in e_j)$$ to be a maximal hyperstar in $\mathcal{H}$ with $v$ contained in $\bigcap\limits_{e\in E(\mathcal{H}(v))} e$. For a vertex $v\in V$, we define the degree  $d_{\mathcal{H}}(v)$ of $v$ to be the number of edges of $\mathcal{H}(v)$.

\noindent $\bullet$ {\sl In what follows, we shall always denote by $G$ a finite abelian group and by $\exp(G)$ the exponent of $G$.}

 Let $S=(g_1,g_2,\ldots,g_m)$ be a sequence of elements in $G$. We call $S$ a {\sl zero-sum} sequence provide that the sum $g_1+g_2+\cdots+g_m$ of all elements in $S$ equals the identity element $0_G$ of $G$. We call $S$ a {\sl zero-sum free} sequence if $S$ contains no nonempty zero-sum subsequence.
Let $\mathcal{F}=\{\mathcal{H}_i: i\in I\}$ be a family of $r$-uniform hypergraphs such that there exists at least an index $i\in I$ with $\exp(G)\mid m(\mathcal{H}_i)$. We define $R(\mathcal{F}, G)$ to be the smallest positive integer $d$ such that for every $G$-coloring of the edges of $K_{d}^{(r)}$ there exists in $K_d^{(r)}$ a copy of some $\mathcal{H}\in \mathcal{F}$ with $$\sum\limits_{e\in E(\mathcal{H})}c(e)=0_G.$$
If $\mathcal{F}=\{\mathcal{H}\}$ is singleton, we shall write $R(\mathcal{H}, G)$ for $R(\mathcal{F}, G)$. We remark that the classical multicolor Ramsey number ensures the existence of $R(\mathcal{F}, G)$ since $\exp(G)\mid m(\mathcal{H})$ for some $\mathcal{H}\in \mathcal{F}$.

In this paper, we shall make a start on studying the zero-sum Ramsey number for $r$-uniform intersecting families with $G$-coloring.  Let $\mathcal{I}_m^{(r)}$ ($\mathcal{S}_m^{(r)}$) be the collection of all $r$-uniform intersecting families (hyperstars, respectively) of size $m$.
The intersecting family and the hyperstar are combinatorial objects that are fundamental for  Hypergraph Theory and have been studied extensively (see \cite{EKR-AhlKha,Frankl78, Talbot03,EKR-HiltonMilner} for example), all of which originate from the celebrated  Erd\H{o}s-Ko-Rado Theorem \cite{EKR} in 1961. Notice
that both intersecting family and hyperstar are more general than the delta-system,
where it is required the edges must have pairwise the same intersection.

In conclusion, the main result of this paper is Theorem \ref{Theorem Main}.

\begin{theorem}\label{Theorem Main} Let $G$ be a finite abelian group, and let $k\geq 1$, $r\geq 2$ be integers. Then $$\Omega(s_{k\exp(G)}(G))-1\leq R (\mathcal{I}_{k\exp(G)}^{(r)}, \  G)\leq R (\mathcal{S}_{k\exp(G)}^{(r)}, \  G)\leq \Omega(s_{k\exp(G)}(G)),$$ where $\Omega(s_{k\exp(G)}(G))$ denotes the least positive integer $n$ such that  ${n-1 \choose  r-1}\geq s_{k\exp(G)}(G)$. In particular, if $$r\mid \Omega(s_{k\exp(G)}(G))-1,$$ then $R (\mathcal{I}_{k\exp(G)}^{(r)}, \  G)=R(\mathcal{S}_{k\exp(G)}^{(r)}, \  G)= \Omega(s_{k\exp(G)}(G))$.
\end{theorem}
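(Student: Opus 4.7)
My plan is to handle the three nested inequalities separately and then strengthen the lower bound under the divisibility hypothesis. I write $m=k\exp(G)$, $s=s_m(G)$, and $\Omega=\Omega(s)$ throughout, so that by the minimality of $\Omega$ we have $\binom{\Omega-1}{r-1}\geq s$ but $\binom{\Omega-2}{r-1}<s$.

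For the upper bound $R(\mathcal{S}_m^{(r)},G)\leq\Omega$, I would take any $G$-coloring $c$ of $E(K_\Omega^{(r)})$ and fix an arbitrary vertex $v$. The maximal hyperstar $\mathcal{H}(v)$ has $d(v)=\binom{\Omega-1}{r-1}\geq s_m(G)$ edges, so reading $\bigl(c(e):e\in E(\mathcal{H}(v))\bigr)$ as a sequence in $G$ of length at least $s_m(G)$, the defining property of the invariant $s_m(G)$ delivers a length-$m$ zero-sum subsequence; the corresponding $m$ edges all contain $v$ and hence form a zero-sum hyperstar of size $m$. The middle inequality $R(\mathcal{I}_m^{(r)},G)\leq R(\mathcal{S}_m^{(r)},G)$ is then automatic, since every hyperstar is an intersecting family, so any zero-sum hyperstar of size $m$ certifies a zero-sum member of $\mathcal{I}_m^{(r)}$.

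The substantive part is the lower bound $R(\mathcal{I}_m^{(r)},G)\geq\Omega-1$, for which I must exhibit a $G$-coloring of $K_{\Omega-2}^{(r)}$ containing no zero-sum intersecting family of size $m$. The starting point is a sequence $T=(h_1,\dots,h_{s-1})$ in $G$ with no length-$m$ zero-sum subsequence, guaranteed by the minimality of $s_m(G)$; because $\binom{\Omega-2}{r-1}\leq s-1=|T|$, the sequence $T$ is at least as long as any star in $K_{\Omega-2}^{(r)}$. I would fix a pivot vertex $v_\star$, color the $\binom{\Omega-3}{r-1}$ edges of $\mathcal{H}(v_\star)$ by distinct entries of $T$, and then extend the coloring to the remaining edges so that for every intersecting family $F$ of size $m$ the color multiset $\{c(e):e\in F\}$ embeds as a subsequence of $T$, whence its sum is nonzero. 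The principal obstacle is that intersecting families of size $m$ need not be hyperstars: the off-pivot colors must be engineered so that every family escaping $\mathcal{H}(v_\star)$ still has its multiset of colors honestly inside $T$. I expect to achieve this by using the leftover entries $h_{\binom{\Omega-3}{r-1}+1},\dots,h_{s-1}$ on off-pivot edges and ruling out awkward shapes by an Erd\H{o}s--Ko--Rado/Hilton--Milner-style analysis of the possible intersecting families of size $m$ in $K_{\Omega-2}^{(r)}$.

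For the equality $R(\mathcal{I}_m^{(r)},G)=\Omega$ when $r\mid\Omega-1$, only the stronger lower bound $R(\mathcal{I}_m^{(r)},G)\geq\Omega$ needs to be added, i.e., a $G$-coloring of $K_{\Omega-1}^{(r)}$ with no zero-sum intersecting family of size $m$. The divisibility $r\mid\Omega-1$ is exactly what lets me partition $V(K_{\Omega-1}^{(r)})$ into $(\Omega-1)/r$ pairwise disjoint $r$-edges, a parallel class. Since no intersecting family can contain two edges of the same parallel class, an appropriately chosen parallel-class shift extends the previous construction from $K_{\Omega-2}^{(r)}$ to $K_{\Omega-1}^{(r)}$ without introducing a zero-sum intersecting family: each such family still picks at most one representative per parallel class, so its color multiset remains a subsequence of (a suitably enlarged) $T$. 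The main difficulty is the global consistency of this extension, and the divisibility hypothesis supplies exactly the combinatorial room (a full parallel partition) that makes the bookkeeping work; combined with the upper bound $R(\mathcal{I}_m^{(r)},G)\leq R(\mathcal{S}_m^{(r)},G)\leq\Omega$ established earlier, this forces equality across the chain.
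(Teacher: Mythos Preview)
Your upper bound and the middle inequality are exactly the paper's argument. The gap is in the lower bound.

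For $R(\mathcal{I}_m^{(r)},G)\geq\Omega-1$ you propose to color the star at a pivot $v_\star$ injectively by entries of $T$ and then ``extend'' so that every size-$m$ intersecting family has its color multiset embedding as a subsequence of $T$. You acknowledge this is the hard part but give no mechanism for it; the reference to EKR/Hilton--Milner is not a plan. Concretely: there are $\binom{\Omega-3}{r}$ edges avoiding $v_\star$, typically far more than the $s-1-\binom{\Omega-3}{r-1}$ leftover entries of $T$, so you are forced to repeat colors on off-pivot edges. Once two intersecting edges share a color-position $j$, the multiset of colors is no longer a subsequence of $T$ and the ``whence its sum is nonzero'' step fails. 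Nothing in your scheme prevents an intersecting family from picking up two off-pivot edges with the same color.

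The paper sidesteps all of this with a different organizing idea. It invokes Baranyai's theorem to write $K_{\Omega-2}^{(r)}$ as an edge-disjoint union of $t$ hypermatchings with
\[
t=\left\lceil\binom{\Omega-2}{r}\Big/\left\lfloor\tfrac{\Omega-2}{r}\right\rfloor\right\rceil\leq\binom{\Omega-2}{r-1}<s,
\]
and colors all edges of the $i$th matching by $g_i$, where $(g_1,\dots,g_t)$ has no length-$m$ zero-sum subsequence. The point is structural: an intersecting family meets each matching in at most one edge, so its $m$ colors come from $m$ distinct positions of $T$ automatically. No case analysis on intersecting shapes is needed. The same argument on $K_{\Omega-1}^{(r)}$ when $r\mid\Omega-1$ (where Baranyai yields exactly $\binom{\Omega-2}{r-1}<s$ perfect matchings) gives the sharper lower bound; your ``one parallel class plus a shift'' does not decompose all of $E(K_{\Omega-1}^{(r)})$ and cannot carry the argument. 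The missing ingredient in your proposal is precisely this matching decomposition.
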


 In addition, we also give the Ramsey number for delta-systems with an arbitrary finite abelian group coloring, which is the generalized form of Theorem D and stated as follows.

\begin{theorem}\label{Theorem S(r,q,t)} \ Let $G$ be a finite abelian group, and let $r>q\geq 0,m\geq |G|$ be integers with $\exp(G)\mid m$. Then
$$(r-q)m+\max(q,D(G)-1)\leq R(S(r,q,m),G)\leq (r-q)m+\min((r-q)(D(G)-1),|G|-1)+q.$$
\end{theorem}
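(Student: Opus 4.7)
I establish the two lower bounds and the two upper bounds separately, so that the outer $\max$ and $\min$ in the statement fall out automatically.

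For $R(S(r,q,m),G)\ge (r-q)m+q$, any copy of $S(r,q,m)$ already occupies $(r-q)m+q$ vertices, so smaller $K_n^{(r)}$ contain none at all. For the stronger inequality $R(S(r,q,m),G)\ge (r-q)m+D(G)-1$ (only interesting when $D(G)-1>q$), fix a zero-sum free sequence $(g_1,\dots,g_{D(G)-1})$ in $G$, put $n=(r-q)m+D(G)-2$, and choose a distinguished set $U=\{u_1,\dots,u_{D(G)-1}\}\subseteq V(K_n^{(r)})$. Assign labels $\phi(u_i)=g_i$ and $\phi(v)=0_G$ for $v\notin U$, and colour each edge $e$ by $c(e)=\sum_{v\in e}\phi(v)$. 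For any candidate zero-sum copy of $S(r,q,m)$ with kernel $Q$ and petals $P_1,\dots,P_m$, the total edge-sum reduces, via $\exp(G)\mid m$ killing $m\,\phi(Q)$, to $\sum_{u_i\in U\cap\bigcup_j P_j}g_i$. Counting gives $|U\setminus Q|\ge D(G)-1-q$, which strictly exceeds the size $D(G)-2-q$ of the non-petal part of $V\setminus Q$, so at least one $u_i\in U\setminus Q$ lies in some petal; the resulting non-empty sub-sum of a zero-sum free sequence is non-zero.

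For the upper bound $R(S(r,q,m),G)\le (r-q)(m+D(G)-1)+q$, put $n$ equal to the right-hand side, fix an arbitrary $q$-set $Q$, and partition $V\setminus Q$ into $m+D(G)-1$ disjoint $(r-q)$-subsets $P_1,\dots,P_{m+D(G)-1}$. A standard extension of Theorem~B (iterating the extraction of zero-sum segments of length $\exp(G)$) gives $s_m(G)\le m+D(G)-1$ under the standing hypothesis $\exp(G)\mid m$ and $m\ge |G|$. Applied to the colour sequence $\bigl(c(P_i\cup Q)\bigr)_{i=1}^{m+D(G)-1}$, this produces $m$ indices with zero sum, and the corresponding petals together with $Q$ give the required zero-sum $S(r,q,m)$.

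The remaining upper bound $R(S(r,q,m),G)\le (r-q)m+(|G|-1)+q$ is proved by induction on $q$. For the inductive step $q\ge 1$, take $n$ equal to the claimed bound, pick any vertex $v\in V$, and define the auxiliary $(r-1)$-uniform $G$-coloring $c'(e')=c(e'\cup\{v\})$ on $V\setminus\{v\}$. Since $n-1=(r-q)m+(|G|-1)+(q-1)$ matches the inductive bound at parameters $(r-1,q-1)$ (which preserves $r-q$), the inductive hypothesis yields a zero-sum $S(r-1,q-1,m)$ on $V\setminus\{v\}$; adjoining $v$ to every edge produces a zero-sum $S(r,q,m)$ in $V$. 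The main obstacle is the base case $q=0$: showing that any $G$-coloring of $K_{rm+|G|-1}^{(r)}$ contains a zero-sum hypermatching of size $m$. The plan here is to take an arbitrary hypermatching $\{P_1,\dots,P_m\}$ with leftover set $X=V\setminus\bigcup_i P_i$ of size $|G|-1$, let $\sigma=\sum_i c(P_i)$, and if $\sigma\ne 0_G$ form a sequence in $G$ of length $|G|$ from $\sigma$ together with colour-shifts $\delta$ produced by single-vertex swaps between the petals and $X$. Applying the defining property of the Davenport constant to this sequence extracts a non-empty zero-sum subsequence, and the technical heart of the proof is arranging that this subsequence can be chosen to involve $\sigma$ and to correspond to a mutually compatible family of swaps---each $x\in X$ entering at most one petal---at which point the modified hypermatching has total sum $0_G$.
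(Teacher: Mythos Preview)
Your lower bounds and the first upper bound $R(S(r,q,m),G)\le (r-q)(m+D(G)-1)+q$ are correct and match the paper's approach (the paper packages the latter as the combination of its Lemma~2.3, first case, and the reduction Lemma~2.4). The inductive reduction from $q$ to $q-1$ is also exactly the paper's Lemma~2.4.

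The genuine gap is in the base case $q=0$ with $n=rm+|G|-1$. Your plan forms the length-$|G|$ sequence $(\sigma,\delta_1,\dots,\delta_{|G|-1})$ and applies the Davenport constant, but nothing forces the extracted zero-sum subsequence to contain $\sigma$. If it does not, you only learn that some subset of swaps has total shift $0$, which leaves the matching sum at $\sigma\neq 0$; and there is no evident mechanism to iterate or to choose the swaps so that $\sigma$ must appear. Concretely, already for $G=\mathbb{Z}_2$ one has $|X|=1$, and if the unique swap you allow for that single leftover vertex happens to have $\delta=0$, the only non-empty zero-sum subsequence is $(\delta)$ and the argument stalls. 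Allowing more swap options does not help in general, since the colouring is adversarial and you have no control over which $\delta$-values are realisable. So the ``technical heart'' you flag is not a detail to be filled in but a real obstruction to this line of attack.

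The paper avoids this by a completely different mechanism: instead of correcting a single matching via swaps, it builds the zero-sum hypermatching additively. Using the bound $s_{\exp(G)}(G)\le |G|+\exp(G)-1$ (Lemma~2.2), one repeatedly extracts disjoint zero-sum blocks of $\exp(G)$ edges until $m-|G|$ edges have been set aside; the remaining $r|G|+|G|-1$ vertices then admit a zero-sum hypermatching of size $|G|$ by the special case $m=|G|$ (handled by Caro's original argument). Concatenating these pieces yields the desired zero-sum $S(r,0,m)$. This approach never needs to locate $\sigma$ inside an auxiliary zero-sum subsequence, which is precisely the step your proposal cannot guarantee.
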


\section{The Proofs}

We begin this section by remarking that $$s_{k\exp(G)}(G)\geq k\exp(G)+D(G)-1$$ for any $k>0$, which holds by the following extremal example $(\underbrace{0,\ldots,0}\limits_{k\exp(G)-1},g_1,g_2,\ldots,g_{D(G)-1})$ containing no zero-sum subsequence of length $k\exp(G)$, where $g_1,g_2,\ldots,g_{D(G)-1}$ is a zero-sum free sequence of elements in $G$ of length $D(G)-1$.

In \cite{Gao03} Gao introduced the invariant $\ell(G)$ for any $G$, which is defined as
the smallest positive integer $t$ such that $s_{k\exp(G)}(G)-k\exp(G)=D(G)-1$ for every $k\geq t$.
Moreover, he showed in the same paper that
\begin{equation}\label{equation ell(G)}
\frac{D(G)}{\exp(G)}\leq \ell(G)\leq \frac{|G|}{\exp(G)}.
\end{equation}

The following lemma due to Baranyai in 1975 will be crucial in our argument.

\begin{lemma}\label{Lemma Baranyai} (Baranyai) \cite{Bergebook} \ \ Let $n,r$ be integers, $n\geq r\geq 2$, and let $m_1,m_2,\ldots,m_t$ be integers with $m_1+m_2+\cdots+m_t={n\choose r}$. Then $K_n^{(r)}$ is the edge-disjoint sum of $t$ hypergraphs $\mathcal{H}_j$, each satisfying $$m(\mathcal{H}_j)=m_j$$ and $$\left\lfloor\frac{rm_j}{n}\right\rfloor \leq d_{\mathcal{H}_j}(v)\leq \left\lceil\frac{rm_j}{n}\right\rceil$$ for any $v\in V$.
\end{lemma}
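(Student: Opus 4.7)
The plan is to follow Baranyai's original strategy, which reduces the theorem to a matrix-rounding statement and then carries out an induction on $n$.

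\textbf{Step 1 (Rounding lemma).} First I would establish the following auxiliary statement: for any real matrix $A=(a_{ij})$, there exists an integer matrix $B=(b_{ij})$ with $b_{ij}\in\{\lfloor a_{ij}\rfloor,\lceil a_{ij}\rceil\}$ such that every row sum and every column sum of $B$ rounds the corresponding sum of $A$ (i.e., lies in $\{\lfloor\sigma\rfloor,\lceil\sigma\rceil\}$). This is a classical fact that follows from the total unimodularity of the bipartite incidence/transportation constraint matrix: the polytope of real matrices with the prescribed entrywise, row-sum and column-sum bounds has integer vertices, so an integer feasible point must exist.

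\textbf{Step 2 (Degree matrix).} Form the $n\times t$ matrix $A$ with $a_{vj}=rm_j/n$. Its row sums are all equal to $\sum_{j=1}^t rm_j/n = r\binom{n}{r}/n = \binom{n-1}{r-1}$ (an integer), and its column sums are $rm_j$ (integers). Applying the rounding lemma yields an integer matrix $B=(b_{vj})$ satisfying $\lfloor rm_j/n\rfloor\leq b_{vj}\leq\lceil rm_j/n\rceil$, with row sums $\binom{n-1}{r-1}$ and column sums $rm_j$. The $j$-th column of $B$ is then the prescribed degree sequence of $\mathcal{H}_j$ and it automatically meets the degree bounds stated in the lemma.

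\textbf{Step 3 (Induction on $n$).} The edge partition itself is constructed by induction on $n$, the base case $n=r$ being trivial. For the inductive step, single out a vertex $v_n$ and split $E(K_n^{(r)})$ into edges containing $v_n$, which after deleting $v_n$ form a copy of $K_{n-1}^{(r-1)}$, and edges avoiding $v_n$, which form a copy of $K_{n-1}^{(r)}$. The matrix $B$ dictates that $\mathcal{H}_j$ must receive exactly $b_{v_n,j}$ edges through $v_n$ and $m_j-b_{v_n,j}$ edges avoiding $v_n$. The inductive hypothesis applied to $K_{n-1}^{(r-1)}$ with sizes $(b_{v_n,j})_{j=1}^t$ and to $K_{n-1}^{(r)}$ with sizes $(m_j-b_{v_n,j})_{j=1}^t$ yields both partitions; a short check shows that, because the row sums of $B$ are exactly $\binom{n-1}{r-1}$, the combined degree of any surviving vertex $v_i$ in $\mathcal{H}_j$ lands back in $\{\lfloor rm_j/n\rfloor,\lceil rm_j/n\rceil\}$.

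The main obstacle is the rounding lemma itself together with the synchronization between the two recursive calls in Step 3, so that the degree constraints at the surviving vertices close up consistently. In practice one proves a slightly strengthened rounding statement that handles both the "through $v_n$" and "avoiding $v_n$" partitions in one shot, and the bookkeeping of this coupled rounding is the technically delicate part of the argument.
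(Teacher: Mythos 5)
The paper does not prove this lemma at all: it is quoted from Berge's book as a known theorem of Baranyai, so there is no in-paper argument to compare against. Judged on its own terms, your sketch assembles the two standard ingredients of Baranyai's proof (an integer rounding lemma from total unimodularity of the transportation polytope, and an induction that peels off one vertex), and Steps 1 and 2 are correct: the matrix $A$ with $a_{vj}=rm_j/n$ has integer row sums $\binom{n-1}{r-1}$ and integer column sums $rm_j$, so a rounded matrix $B$ with the stated marginals exists.

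The genuine gap is in Step 3. If you apply the inductive hypothesis independently to $K_{n-1}^{(r-1)}$ with sizes $b_{v_n,j}$ and to $K_{n-1}^{(r)}$ with sizes $m_j-b_{v_n,j}$, then for a surviving vertex $v_i$ the degree in $\mathcal{H}_j$ is a sum $d_1+d_2$, where $d_1$ is a rounding of $(r-1)b_{v_n,j}/(n-1)$ and $d_2$ is a rounding of $r(m_j-b_{v_n,j})/(n-1)$. These two targets sum to $(rm_j-b_{v_n,j})/(n-1)\approx rm_j/n$, but the sum of two independent roundings can differ from the rounding of the sum by $1$ in either direction, so $d_1+d_2$ can land outside $\{\lfloor rm_j/n\rfloor,\lceil rm_j/n\rceil\}$; worse, the two recursive calls are made independently, so there is no mechanism to coordinate which vertices receive the floor and which the ceiling in each call. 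The ``short check'' you invoke therefore does not go through, and this is precisely the obstruction that makes Baranyai's theorem nontrivial. The actual proof does not recurse on two smaller complete hypergraphs; it proves a stronger statement about partial partitions (multisets of subsets of the first $i$ vertices, with each trace represented with a prescribed multiplicity), and at each of the $n$ vertex-addition steps runs a single integral flow computation that simultaneously decides, for every partial edge in every class, whether the new vertex is adjoined. That coupled flow argument is the substance you defer to a ``slightly strengthened rounding statement''; it is the heart of the proof rather than bookkeeping, and as written your induction does not close.
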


\bigskip

Now we are in a position to prove Theorem \ref{Theorem Main}.

\medskip

\noindent {\bf Proof of Theorem \ref{Theorem Main}.}

Notice that $\mathcal{S}_{k\exp(G)}^{(r)}$ is a subset of $\mathcal{I}_{k\exp(G)}^{(r)}$.
Hence, we have
\begin{equation}\label{equation I leq S}
R(\mathcal{I}_{k\exp(G)}^{(r)}, \  G)\leq R (\mathcal{S}_{k\exp(G)}^{(r)}, \  G).
\end{equation}

Denote $n=\Omega(s_{k\exp(G)}(G)).$ We first show that
\begin{equation}\label{equation upper bound for S}
R (\mathcal{S}_{k\exp(G)}^{(r)}, \  G)\leq n.
\end{equation} Let $c:E(K_n^{(r)})\rightarrow G$ be an arbitrary $G$-coloring of the edges of $K_n^{(r)}$. Fix a vertex $v$ in $K_n^{(r)}$. Since the hyperstar $K_n^{(r)}(v)$ has size  $|K_n^{(r)}(v)|={n-1 \choose r-1}\geq s_{k\exp(G)}(G)$, i.e., $(c(e))_{e\in E(K_n^{(r)}(v))}$
is a sequence of elements in $G$ of length $|K_n^{(r)}(v)|\geq s_{k\exp(G)}(G)$, we derive that $K_n^{(r)}(v)$ contains a partial hypergraph,
denoted $\mathcal{H}$, with $m(\mathcal{H})=k\exp(G)$ and $\sum\limits_{e\in E(\mathcal{H})}c(e)=0$. Since $v\in \bigcap\limits_{e\in E(\mathcal{H})} e$, we have that $\mathcal{H}$ is a zero-sum copy of some hyperstar in $\mathcal{S}_{k\exp(G)}^{(r)}$, which proves \eqref{equation upper bound for S}.

Now we show that
\begin{equation}\label{equation I geq n-1}
R (\mathcal{I}_{k\exp(G)}^{(r)}, \  G)\geq n-1.
\end{equation} Take a complete $r$-uniform hypergraph $K_{n-2}^{(r)}$ of order $n-2$. Let $$t=\left\lceil{n-2\choose r} \left\lfloor\frac{n-2}{r} \right\rfloor^{-1}\right\rceil.$$ Applying Lemma \ref{Lemma Baranyai} with $m_1=m_2=\cdots=m_{t-1}=\left\lfloor\frac{n-2}{r} \right\rfloor$ and $m_t={n-2\choose r}-(t-1)\left\lfloor\frac{n-2}{r} \right\rfloor$, we conclude that $K_{n-2}^{(r)}$ is the sum of $t$ edge-disjoint hypermatchings, denoted $\mathcal{M}_1,\mathcal{M}_2,\ldots,\mathcal{M}_t$ respectively, where $|\mathcal{M}_i|=m_i$ for each $i\in \{1,2,\ldots,t\}$. By the virtue of minimality of $n$, we have that $$\begin{array}{llll}
& &{n-2\choose r} \left\lfloor\frac{n-2}{r} \right\rfloor^{-1}\\
&\leq& {n-2\choose r}\frac{r}{n-r-1}\\
&=&{n-2\choose r-1}\\
&<&s_{k\exp(G)}(G),\\
\end{array}$$
and hence $t=\left\lceil{n-2\choose r} \left\lfloor\frac{n-2}{r} \right\rfloor^{-1}\right\rceil<s_{k\exp(G)}(G)$. Take an arbitrary sequence $T=(g_1,g_2,\ldots,g_t)$ of elements in $G$ of length $t<s_{k\exp(G)}(G)$ such that $T$ contains no zero-sum subsequence of length $k\exp(G)$. Then we color the edges of Hypermatching $\mathcal{M}_i$ by the element $g_i$, where $i\in \{1,2,\ldots,t\}$. It is easy to verify that $K_{n-2}^{(r)}$ contains no zero-sum copy of any intersecting family in $\mathcal{I}_{k\exp(G)}^{(r)}$, which proves \eqref{equation I geq n-1}.

It remains to consider the case that $r$ divides $n-1$. Take a complete $r$-uniform hypergraph $K_{n-1}^{(r)}$ of order $n-1$. Then $\left\lceil{n-1\choose r} \left\lfloor\frac{n-1}{r} \right\rfloor^{-1}\right\rceil=\left\lceil{n-1\choose r} \frac{r}{n-1} \right\rceil=
{n-2\choose r-1}<s_{k\exp(G)}(G)$. By a similar argument as above, we have a coloring $c: E(K_{n-1}^{(r)})\rightarrow G$ such that $K_{n-1}^{(r)}$ contains no zero-sum copy of any intersecting family in $\mathcal{I}_{k\exp(G)}^{(r)}$,
which implies that $R (\mathcal{I}_{k\exp(G)}^{(r)}, \  G)>n-1$. Combined with \eqref{equation I leq S} and \eqref{equation upper bound for S}, we have that $R (\mathcal{I}_{k\exp(G)}^{(r)}, \  G)=R (\mathcal{S}_{k\exp(G)}^{(r)}, \  G)=n$ for the case when $r$ divides $n-1$. This completes the proof of the theorem. \qed

\noindent {\bf Remark.} \ It is easy to observe that for the case of
$r>\frac{\Omega(s_{k\exp(G)}(G))-1}{2}$,
$$\begin{array}{llll}R (\mathcal{I}_{k\exp(G)}^{(r)}, \  G)=\left\{\begin{array}{llll}
               \Omega(s_{k\exp(G)}(G))-1,  & \mbox{if \ \ } {\Omega(s_{k\exp(G)}(G))-1 \choose r}\geq s_{k\exp(G)}(G);\\
              \Omega(s_{k\exp(G)}(G)),  & \mbox{if \ \ } otherwise.\\
              \end{array}
              \right.
\end{array}$$

Notice that for the case of $r=2$, i.e., in the graph setting, we have the following corollary of Theorem \ref{Theorem Main}.

\begin{cor} For any integer $k>0$, $$s_{k\exp(G)}(G) \leq R(K_{1,k\exp(G)},G)\leq s_{k\exp(G)}(G)+1.$$
Moreover, if $s_{k\exp(G)}(G)$ is even then  $R(K_{1,k\exp(G)},G)=s_{k\exp(G)}(G)+1.$
\end{cor}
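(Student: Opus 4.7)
The plan is to specialize Theorem \ref{Theorem Main} to the graph setting $r=2$. First I would compute $\Omega(s_{k\exp(G)}(G))$ in this case: the defining condition ${n-1 \choose r-1}\geq s_{k\exp(G)}(G)$ collapses to $n-1\geq s_{k\exp(G)}(G)$, so $\Omega(s_{k\exp(G)}(G))=s_{k\exp(G)}(G)+1$. Next I would observe that every hyperstar of size $m$ in the graph setting is (isomorphic to) $K_{1,m}$: the common vertex guaranteed by the definition of hyperstar plays the role of the center, and the other endpoints of the $m$ edges form the leaves. Consequently, up to isomorphism $\mathcal{S}_{k\exp(G)}^{(2)}=\{K_{1,k\exp(G)}\}$, so that $R(\mathcal{S}_{k\exp(G)}^{(2)},G)=R(K_{1,k\exp(G)},G)$.

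Plugging these two identifications into the chain of inequalities supplied by Theorem \ref{Theorem Main} immediately yields
$$s_{k\exp(G)}(G)\leq R(\mathcal{I}_{k\exp(G)}^{(2)},G)\leq R(K_{1,k\exp(G)},G)\leq s_{k\exp(G)}(G)+1,$$
which gives the two-sided bound in the corollary. For the ``moreover'' clause I would invoke the final sentence of Theorem \ref{Theorem Main}: when $r\mid \Omega(s_{k\exp(G)}(G))-1$ the upper bound is attained. Here the divisibility condition reads $2\mid s_{k\exp(G)}(G)$, which is precisely the parity hypothesis, and it forces $R(K_{1,k\exp(G)},G)=\Omega(s_{k\exp(G)}(G))=s_{k\exp(G)}(G)+1$.

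Since the corollary is extracted from Theorem \ref{Theorem Main} purely by the identification $\mathcal{S}_m^{(2)}=\{K_{1,m}\}$ together with the explicit formula for $\Omega$, no new combinatorial or zero-sum ingredient is needed. The only part that one might want to double-check is the assertion about hyperstars in the graph setting, but this is immediate from the definition $\cap_{i=1}^m e_i\neq \emptyset$ together with $|e_i|=2$: a common vertex exists and, because each edge has only one other endpoint, the hyperstar is literally a star graph. Thus the main obstacle has already been cleared in the proof of Theorem \ref{Theorem Main}, and the corollary requires only this bookkeeping.
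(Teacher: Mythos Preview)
Your proposal is correct and follows exactly the route the paper intends: the corollary is stated immediately after Theorem~\ref{Theorem Main} as the specialization $r=2$, and your computations of $\Omega(s_{k\exp(G)}(G))=s_{k\exp(G)}(G)+1$ and $\mathcal{S}_{k\exp(G)}^{(2)}=\{K_{1,k\exp(G)}\}$ are precisely the bookkeeping needed.
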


\bigskip

In the rest of this section, we shall prove Theorem \ref{Theorem S(r,q,t)} and
split the proof into lemmas.

 \begin{lemma}\label{Lemma sm(G)GaoYang}
 (\cite{Gaothesis}, \cite{GaoYang}, [\cite{Geroldinger06}, Theorem 5.7.4]) \ For any $G$, $$s(G)\leq |G| + \exp(G) -1.$$
 \end{lemma}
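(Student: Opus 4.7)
The plan is to prove $s(G)\leq|G|+\exp(G)-1$ by induction on the order $|G|$, with the Erd\H{o}s-Ginzburg-Ziv theorem (Theorem A) serving as the base case. When $G$ is cyclic of order $n$, we have $|G|=\exp(G)=n$, and Theorem A gives $s(\mathbb{Z}_n)=2n-1$, matching $|G|+\exp(G)-1$ exactly.

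For the inductive step, decompose $G=\mathbb{Z}_{n_1}\oplus\cdots\oplus\mathbb{Z}_{n_k}$ with $n_1\mid\cdots\mid n_k=\exp(G)$. Assume first that $G$ has at least two distinct invariant factors, and let $j$ be the largest index with $n_j<n_{j+1}$; I then take $H=n_jG$. A direct computation shows $H\cong(\mathbb{Z}_{n_k/n_j})^{k-j}$ has exponent $n_k/n_j$, while $G/H$ has exponent $n_j$, so the crucial structural identity $\exp(H)\cdot\exp(G/H)=n_k=\exp(G)$ holds.

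Given a sequence $S$ in $G$ of length $|G|+\exp(G)-1$, I would project $S$ onto $G/H$ and iteratively apply the inductive bound $s(G/H)\leq|G/H|+n_j-1$ to extract subsequences $T_1,T_2,\ldots$ of $S$, each of length $n_j$, whose sums $u_i=\sigma(T_i)$ lie in $H$. A direct count (using $|G|=|H|\cdot|G/H|$ together with the identity above) shows the extraction produces at least $s(H)\leq|H|+\exp(H)-1$ such subsequences. Applying the inductive bound on $H$ to the sequence $(u_i)$ then yields an index set $I$ with $|I|=\exp(H)=n_k/n_j$ and $\sum_{i\in I}u_i=0$ in $H$, so that $\bigcup_{i\in I}T_i$ is a zero-sum subsequence of $S$ of length $|I|\cdot n_j=\exp(G)$ in $G$, as required.

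The main obstacle is the homocyclic case $G\cong\mathbb{Z}_n^k$ in which all invariant factors coincide; then $n_jG=0$ for every proper choice of $j$, and the subgroup reduction collapses. This case is the hardest and must be handled separately, typically via the Chevalley-Warning theorem when $n$ is prime, together with a Sylow-type reduction $\mathbb{Z}_n\cong\bigoplus_p\mathbb{Z}_{p^{a_p}}$ to cover general $n$. Splicing this separate argument into the induction above yields the desired bound $s(G)\leq|G|+\exp(G)-1$ in full generality.
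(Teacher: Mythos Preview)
The paper does not prove this lemma at all; it is quoted as a known result with citations to Gao's thesis, Gao--Yang, and Geroldinger--Halter-Koch (Theorem~5.7.4). So there is no in-paper argument to compare against, and your outline must stand on its own.

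Your inductive scheme is sound where it applies. For a non-homocyclic $G$, the choice $H=n_jG$ really does give $\exp(H)\cdot\exp(G/H)=\exp(G)$, and the block-extraction count works: with $|G/H|\ge n_j$ one checks that a sequence of length $|G|+\exp(G)-1$ yields at least $|H|+\exp(H)-1\ge s(H)$ disjoint length-$n_j$ blocks whose sums lie in $H$, after which the inductive bound on $H$ finishes. The same reduction, with $H=pG$ for a prime $p\mid n$, also disposes of the homocyclic case $\mathbb{Z}_n^k$ whenever $n$ is composite. So everything funnels down, as you say, to the elementary abelian groups $\mathbb{Z}_p^k$.

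That base case, however, is where your outline has a genuine gap. Invoking ``Chevalley--Warning'' is not enough: the standard setup (the $k$ coordinate polynomials plus the polynomial $\sum_i x_i^{p-1}$, each of degree $p-1$, in $p^k+p-1>(k+1)(p-1)$ variables) only produces a nonempty zero-sum subsequence whose length is a \emph{multiple} of $p$, not one of length exactly $p=\exp(G)$. There is no obvious way to shorten such a subsequence to length $p$, and the inductive machinery above is unavailable precisely because no proper subgroup $H$ of $\mathbb{Z}_p^k$ satisfies $\exp(H)\cdot\exp(\mathbb{Z}_p^k/H)=p$. The cited proofs close this gap by a more delicate counting argument modulo $p$ (showing, in effect, that the number of length-$p$ zero-sum subsequences is nonzero modulo $p$ once $|S|\ge |G|+p-1$), which is substantially more than a bare appeal to Chevalley--Warning. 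You should either supply that argument or cite it explicitly; as written, the homocyclic prime case is asserted rather than proved.
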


\begin{lemma}\label{Lemma Matching} \ Let $r>0, m\geq |G|$ be integers with $\exp(G)\mid m$. Then
$$R(S(r,0,m),G)\leq  rm+\min(r(D(G)-1),|G|-1).$$
\end{lemma}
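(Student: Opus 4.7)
The plan is to establish the two upper bounds $rm+r(D(G)-1)$ and $rm+|G|-1$ separately, so that their minimum gives the statement.

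For the first bound, I would take $d=r(m+D(G)-1)$ vertices in $K_d^{(r)}$ with any $G$-coloring $c$, partition them into $m+D(G)-1$ pairwise disjoint $r$-edges, and read off the corresponding $(m+D(G)-1)$-term color sequence in $G$. Since $m\ge |G|$ and $\exp(G)\mid m$, one has $m/\exp(G)\ge |G|/\exp(G)\ge \ell(G)$ by \eqref{equation ell(G)}, hence the definition of $\ell(G)$ yields $s_m(G)=m+D(G)-1$. The color sequence thus contains a zero-sum subsequence of length exactly $m$, whose $m$ corresponding pairwise disjoint $r$-edges form the desired zero-sum hypermatching. This gives $R(S(r,0,m),G)\le rm+r(D(G)-1)$.

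For the second bound $R(S(r,0,m),G)\le rm+|G|-1$, the simpler strategy above fails: with $d=rm+|G|-1$ the maximum hypermatching in $K_d^{(r)}$ has only $m+\lfloor(|G|-1)/r\rfloor$ edges, which is in general strictly less than the EGZ-threshold $s_m(G)=m+D(G)-1$ when $r\ge 2$. My approach is therefore to start from an arbitrary $m$-hypermatching $M=\{e_1,\ldots,e_m\}$ leaving a set $W$ of $|G|-1$ leftover vertices, and to adjust $M$ by single-vertex swaps with $W$ until its color sum becomes $0_G$. Since $m\ge|G|>|W|$, each leftover vertex $w_j\in W$ may be associated to a distinct edge $e_{i(j)}$, and within each such edge there are $r$ possible swap partners; this gives $|G|-1$ mutually independent ``perturbation slots,'' each affording an $r$-element menu of color changes in $G$ (together with the option of leaving the slot inactive). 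Completion amounts to showing that some combined choice of swaps realizes a total perturbation equal to $-\sum_{i=1}^m c(e_i)$.

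The main obstacle is exactly this last step for the $|G|-1$ bound: one must verify that the $|G|-1$ independent perturbation slots, each with its $r$-element menu, collectively cover the target element $-\sum_{i=1}^m c(e_i)$ in $G$. This is a genuine additive-combinatorial task, and it is where Lemma~\ref{Lemma sm(G)GaoYang} (the bound $s(G)\le |G|+\exp(G)-1$) enters, combined with a Kneser/Davenport-type argument on the achievable sumset in $G$. Once this covering is established, the two bounds together deliver $R(S(r,0,m),G)\le rm+\min(r(D(G)-1),|G|-1)$ as required.
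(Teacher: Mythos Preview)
Your treatment of the first bound $R(S(r,0,m),G)\le r(m+D(G)-1)$ is correct and coincides with the paper's argument: partition the vertex set into $m+D(G)-1$ disjoint $r$-edges and apply $s_m(G)=m+D(G)-1$ (valid since $m/\exp(G)\ge |G|/\exp(G)\ge \ell(G)$).

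For the second bound $R(S(r,0,m),G)\le rm+|G|-1$, however, your swap strategy has a genuine gap at precisely the point you flag. The perturbation sets $A_j=\{0\}\cup\{c(e_{i(j)}\setminus\{v\}\cup\{w_j\})-c(e_{i(j)}):v\in e_{i(j)}\}$ carry no guaranteed lower bound on their size; nothing prevents $A_j=\{0\}$ for every $j$ (and even if each $|A_j|\ge 2$, one may have all $A_j=\{0,h\}$ for a single element $h$ of order~$2$, so the sumset $A_1+\cdots+A_{|G|-1}$ stays $\{0,h\}$). Thus neither a Kneser/Cauchy--Davenport argument nor Lemma~\ref{Lemma sm(G)GaoYang} delivers the covering claim from this setup, and no mechanism is provided to change the base matching $M$ or the assignment $j\mapsto i(j)$ so as to force larger menus. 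As written, the argument does not close.

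The paper's route for the $|G|-1$ bound is entirely different and does not use swaps. It applies Lemma~\ref{Lemma sm(G)GaoYang} \emph{iteratively}: from any $|G|+\exp(G)-1$ pairwise disjoint edges one can extract $\exp(G)$ of them with zero colour-sum, and repeating this $(m-|G|)/\exp(G)$ times (always enough disjoint edges remain) yields a zero-sum hypermatching of size $m-|G|$. The remaining vertex set $A$ then has $|A|=r|G|+|G|-1$, and a second zero-sum hypermatching of size $|G|$ is found inside $A$ (the case $|G|\mid m$ reduces to Caro's original argument, which the paper cites). Concatenating the two pieces gives the desired zero-sum $S(r,0,m)$. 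The key difference is that Lemma~\ref{Lemma sm(G)GaoYang} is used to \emph{build} zero-sum blocks directly, rather than to analyze a sumset of perturbations; this sidesteps the coverage issue that blocks your approach.
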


\begin{proof} Denote $n=rm+\min(r(D(G)-1),|G|-1)$. Let $c:E(K_{n}^{(r)})\rightarrow G$ be an arbitrary $G-$coloring. It suffices to show that there exists  in $K_{n}^{(r)}$ a zero-sum copy of some delta-system of type $S(r,0,m)$. If $r(D(G)-1)\leq |G|-1$, i.e., $n=rm+r(D(G)-1)=r(m+D(G)-1)$, the conclusion follows from \eqref{equation ell(G)} and the fact that $m=k\exp(G)$ for some $k\geq \ell(G)$ implies any sequence of elements in $G$ of length $m+D(G)-1$ contains a zero-sum subsequence of length exactly $m$. Hence, we may assume that $r(D(G)-1)> |G|-1$ and  $$n=rm+|G|-1.$$

For the case that $m$ is a multiple of $|G|$, the conclusion holds by a similar argument used  by Caro in \cite{Caro91}, which is omitted here. Now we consider the case that $m$ is not a multiple of $|G|$.
Applying Lemma \ref{Lemma sm(G)GaoYang} repeatedly, we can find a hypermatching of size $m-|G|$ in $K_{n}^{(r)}$, say $e_1,e_2,\ldots,e_{m-|G|}$,  with \begin{equation}\label{equation first hypermatching}
c(e_1)+c(e_2)+\cdots+c(e_{m-|G|})=0.
 \end{equation}
 Let \begin{equation}\label{equation A}
 A=V(K_{n}^{(r)})\setminus \bigcup_{i=1}^{m-|G|}e_i.
   \end{equation}
  Since $|A|=|V(H)|-|\bigcup\limits_{i=1}^{m-|G|}e_i|=n-r(m-|G|)=(rm+|G|-1)-r(m-|G|)=r|G|+|G|-1$, it follows that there exists a hypermatching
$e_{m-|G|+1},e_{m-|G|+2},\ldots,e_{m}$ of size $|G|$
  with  \begin{equation}\label{equaiton ej in A}
  e_j\subseteq A
    \end{equation} for all $j\in \{m-|G|+1,m-|G|+2,\ldots,m\}$ and \begin{equation}\label{equation second hypermatching}
    c(e_{m-|G|+1})+c(e_{m-|G|+2})+\cdots+c(e_{m})=0.
       \end{equation}
       By \eqref{equation first hypermatching}, \eqref{equation A},
       \eqref{equaiton ej in A} and \eqref{equation second hypermatching}, we derive that  $e_1,e_2,\ldots,e_m$ is  a zero-sum delta-system of type $S(r,0,m)$ in $K_{n}^{(r)}$. This completes the proof. \end{proof}

       \medskip

\begin{lemma}\label{Lemma affine} \ Let $r>q\geq 0,m\geq |G|$ be integers with $\exp(G)\mid m$. Then $$R(S(r,q,m),G)\leq \min\limits_{0\leq p\leq q}\{p+R(S(r-p,q-p,m),G)\}.$$
\end{lemma}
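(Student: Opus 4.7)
The plan is to prove this inequality by an \emph{affine reduction}: for each $p$ with $0 \leq p \leq q$, I will show directly that $R(S(r,q,m),G) \leq p + R(S(r-p,q-p,m),G)$ by pre-fixing $p$ common vertices that will sit inside the required intersection $Q$ of any delta-system we construct.

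Concretely, fix any $p$ with $0 \leq p \leq q$. Note first that $R(S(r-p,q-p,m),G)$ is well-defined: when $p < q$ we have $r-p > q-p \geq 0$, and when $p = q$ we have $r-p = r-q \geq 1$ and $q-p = 0$, so the corresponding Ramsey number exists (for $p = q$ this is the hypermatching case covered by Lemma 2.3). Set $N = R(S(r-p,q-p,m),G)$ and $n = p + N$, and let $c : E(K_n^{(r)}) \to G$ be an arbitrary $G$-coloring. Pick any $p$ distinct vertices $v_1,\ldots,v_p$ of $K_n^{(r)}$, let $P = \{v_1,\ldots,v_p\}$, and let $U$ be the remaining set of $N$ vertices. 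Define an induced $G$-coloring of $K_N^{(r-p)}$ on vertex set $U$ by
$$c'(f) \;=\; c(f \cup P) \qquad \text{for every $(r-p)$-subset } f \subseteq U.$$
Since $|f| = r - p$ and $f \cap P = \emptyset$, the set $f \cup P$ is indeed an edge of $K_n^{(r)}$, so $c'$ is well defined.

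By the definition of $R(S(r-p,q-p,m),G) = N$, the coloring $c'$ on $K_N^{(r-p)}$ admits a zero-sum delta-system $f_1,\ldots,f_m$ of type $S(r-p,q-p,m)$; that is, there is a common intersection $Q' \subseteq U$ with $|Q'| = q-p$ such that $f_i \cap f_j = Q'$ for all $i \neq j$, and $\sum_{i=1}^m c'(f_i) = 0_G$. Lifting back, set $e_i = f_i \cup P$ for $i = 1,\ldots,m$. Each $e_i$ is an $r$-set, and
$$e_i \cap e_j \;=\; (f_i \cap f_j) \cup P \;=\; Q' \cup P$$
for $i \neq j$, with $|Q' \cup P| = (q-p) + p = q$ (since $Q' \subseteq U$ is disjoint from $P$). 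So $e_1,\ldots,e_m$ is a delta-system of type $S(r,q,m)$ in $K_n^{(r)}$, and $\sum_{i=1}^m c(e_i) = \sum_{i=1}^m c'(f_i) = 0_G$. This shows $R(S(r,q,m),G) \leq n = p + R(S(r-p,q-p,m),G)$, and taking the minimum over $0 \leq p \leq q$ completes the proof.

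There is essentially no obstacle here beyond bookkeeping; the argument is just the observation that fixing $p$ universal vertices converts the $S(r,q,m)$-problem into the $S(r-p,q-p,m)$-problem on a smaller uniform hypergraph. The only points requiring a small check are that $R(S(r-p,q-p,m),G)$ is defined for every $p \in \{0,1,\ldots,q\}$ (in particular, the boundary case $p = q$ reduces to the hypermatching bound of Lemma 2.3, which is used later in combination with this lemma to derive Theorem 1.2) and that the parameters $m \geq |G|$ and $\exp(G) \mid m$ pass through unchanged, which they manifestly do.
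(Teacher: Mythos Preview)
Your proof is correct and follows essentially the same approach as the paper: fix a set of $p$ vertices, induce a $G$-coloring on the complete $(r-p)$-uniform hypergraph on the remaining vertices, invoke the definition of $R(S(r-p,q-p,m),G)$ to obtain a zero-sum delta-system there, and lift it back by adjoining the fixed $p$ vertices. The only difference is cosmetic notation and your additional remarks on well-definedness at the boundary $p=q$, which the paper omits.
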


\begin{proof} If $q=0$, the conclusion means nothing.  Now assume $q>0$. Take an arbitrary integer $0\leq p\leq q$.  Let $$n=R(S(r-p,q-p,m),G).$$ It suffices to show that for an arbitrary $G$-coloring $c:E(K_{n+p}^{(r)})\rightarrow G$ there exists  in $K_{n+p}^{(r)}$ a zero-sum copy of some delta-system of type $S(r,q,m)$. Let $V=V(K_{n+p}^{(r)})$. Take a set $A\subseteq V$ with
\begin{equation}\label{equation |A|=p}
|A|=p.
\end{equation} Let $\mathcal{H}$ be a complete $(r-p)$-uniform hypergraph on the vertex set $V\setminus A$. Let $c':E(\mathcal{H})\rightarrow G$ be a $G$-coloring given by
\begin{equation}\label{equation c'=c}
c'(e')=c(e'\cup A)
\end{equation} for every $e'\in E(\mathcal{H})$. Since $|V(\mathcal{H})|=|V|-|A|=n$, it follows that there exists in $\mathcal{H}$ a zero-sum delta-system of type $S(r-p,q-p,m)$, say $e_1^{'},e_2^{'},\ldots,e_m^{'}$. Let
\begin{equation}\label{equation ei=ei'cup A}
e_i=e_i^{'}\cup A
\end{equation}
 for each $i\in \{1,2,\ldots,m\}$. It follows from \eqref{equation |A|=p}, \eqref{equation c'=c} and \eqref{equation ei=ei'cup A} that $e_1,e_2,\ldots,e_m$ is a zero-sum delta-system of type $S(r,q,m)$ in $K_{n+p}^{(r)}$. This completes the proof.
\end{proof}

\medskip

\begin{lemma}\label{Lemma lower bound} \ Let $r>q\geq 0, m\geq |G|$ be integers with $\exp(G)\mid m$. Then $$R(S(r,q,m),G)\geq (r-q)m+\max(q,D(G)-1).$$
\end{lemma}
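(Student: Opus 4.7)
The plan is to exhibit, on $N=(r-q)m+\max(q,D(G)-1)-1$ vertices, a $G$-coloring of the complete $r$-uniform hypergraph $K_N^{(r)}$ that admits no zero-sum delta-system of type $S(r,q,m)$. I would split into two cases based on which term in the maximum dominates, since the two regimes are obstructed for genuinely different reasons.

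In the easy case $q\geq D(G)-1$, we have $N=q+(r-q)m-1$. Any delta-system of type $S(r,q,m)$ consists of a common $q$-set $Q$ together with $m$ pairwise-disjoint ``tails'' of size $r-q$, so it occupies precisely $q+(r-q)m>N$ distinct vertices. Hence no such delta-system fits inside $K_N^{(r)}$ at all, and any $G$-coloring trivially witnesses the bound.

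The main case is $q<D(G)-1$, where $N=(r-q)m+D(G)-2$. Here I would fix a zero-sum-free sequence $g_1,\dots,g_{D(G)-1}$ in $G$, choose $D(G)-1$ distinguished vertices $v_1,\dots,v_{D(G)-1}$ of $K_N^{(r)}$, assign weights $w(v_i)=g_i$ to the distinguished vertices and $w(v)=0_G$ to all remaining vertices, and then color each edge by $c(e)=\sum_{v\in e}w(v)$. For any putative delta-system $e_1,\dots,e_m$ of type $S(r,q,m)$ with common intersection $Q$, every vertex of $Q$ is counted $m$ times while every vertex of $(\cup_i e_i)\setminus Q$ is counted exactly once, so
$$\sum_{i=1}^m c(e_i)=m\sum_{v\in Q}w(v)+\sum_{v\in(\cup_i e_i)\setminus Q}w(v)=\sum_{v\in(\cup_i e_i)\setminus Q}w(v),$$
the first term vanishing because $\exp(G)\mid m$. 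Now $(\cup_i e_i)\setminus Q$ has cardinality $(r-q)m$, so its complement in $V(K_N^{(r)})$ has cardinality only $D(G)-2$ and therefore cannot contain all of $v_1,\dots,v_{D(G)-1}$. Consequently the remaining sum is a nonempty subsum extracted from the zero-sum-free sequence $(g_1,\dots,g_{D(G)-1})$, hence nonzero by the defining property of $D(G)$.

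I do not foresee a serious obstacle. The key manipulation is simply rewriting the sum over a delta-system as a weighted sum over vertices, at which point $\exp(G)\mid m$ annihilates the contribution from $Q$ and the problem reduces to the definition of the Davenport constant. The only point demanding care is the case split, as in Case~1 the obstruction is geometric (the host hypergraph is too small to contain any delta-system of the required type) whereas in Case~2 it is algebraic (built from a maximal zero-sum-free sequence).
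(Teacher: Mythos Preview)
Your proposal is correct and follows essentially the same construction as the paper: the trivial vertex-count bound for the $q\geq D(G)-1$ case, and for the other case the vertex-weight coloring $c(e)=\sum_{v\in e}w(v)$ built from a zero-sum-free sequence of length $D(G)-1$. In fact you supply the verification (using $\exp(G)\mid m$ to kill the contribution from $Q$ and a pigeonhole count to force a nonempty subsum) that the paper leaves to the reader.
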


\begin{proof} The inequality $R(S(r,q,t),G)\geq (r-q)m+q$ is trivial. Now we  prove  $R(S(r,q,m),G)\geq (r-q)m+D(G)-1$ by giving a $G$-coloring $c:E(K_n^{(r)})\rightarrow G$ such that there exists no zero-sum copy of any delta-system of type $S(r,q,m)$ in $K_n^{(r)}$, where $n=(r-q)m+D(G)-2$.  Let $$V(K_n^{(r)})=V_1\cup V_2$$ be a partition with  $|V_1|=D(G)-1$ and $|V_2|=(r-q)m-1$. Say  $$V_1=\{v_1,v_2,\ldots,v_{D(G)-1}\}$$
and $$V_2=\{v_{D(G)},v_{D(G)+1},\ldots,v_{n}\}.$$ Take a zero-sum free sequence $(g_1,g_2,\ldots,g_{D(G)-1})$ of elements in $G$ of length $D(G)-1$. Let ${\rm f}:V\rightarrow G$ be a map given by
$$\begin{array}{llll}f(v_i)=\left\{\begin{array}{llll}
               g_i,  & \mbox{if \ \ } 1\leq i\leq D(G)-1;\\
               0_G,  & \mbox{if \ \ } D(G)\leq i\leq n.\\
              \end{array}
              \right.
\end{array}$$
Then we define the $G$-coloring $c:E(K_n^{(r)})\rightarrow G$ given by $$c(e)=\sum\limits_{v\in e}f(v)$$ for any edge $e\in E(K_n^{(r)}).$ We can verify that $c$ is the desired $G$-coloring. This completes the proof.
\end{proof}

Therefore, by applying Lemma \ref{Lemma Matching}, Lemma \ref{Lemma affine} with $p=q$, and Lemma \ref{Lemma lower bound}, we have Theorem \ref{Theorem S(r,q,t)} proved.

\section{Concluding remarks}

We remark first that the techniques and arguments of Theorem \ref{Theorem S(r,q,t)} is similar as ones used by Caro \cite{Caro91}, in which Caro also mentioned that ``the essence of his arguments can be generalized quite directly to any finite abelian group.'' However, he did not put out the generalized form even in his later joint paper with Provstgaard (see \cite{CaroProvstgaard99}). For the sake of completeness, in this paper we include Theorem \ref{Theorem S(r,q,t)}, which should belong to Caro.

It is noteworthy that the fact that zero-sum Ramsey number for intersecting family is almost the same as  the zero-sum Ramsey number for hyperstars family seems to have a connection with Erd\H{o}s-Ko-Rado Theorem, which states that in any complete $r$-uniform hypergraph $K_{n}^{(r)}$ of order $n$ with $r\leq \frac{n}{2}$,  the number of edges of a maximum intersecting family is ${n-1\choose r-1}$, exactly the number of edges of the hyperstar $K_{n}^{(r)}(v)$ for any fixed vertex $v\in V(K_n^{(r)})$.
Naturally, combined with Theorem \ref{Theorem Main} we conjecture the following.

\begin{conj} \ Let $k\geq 1$, $r\geq 2$ be positive integers with $r\leq \frac{\Omega(s_{k\exp(G)}(G))-1}{2}$. Then $$ R (\mathcal{I}_{k\exp(G)}^{(r)}, \  G)=R (\mathcal{S}_{k\exp(G)}^{(r)}, \  G).$$
\end{conj}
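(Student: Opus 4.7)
The plan is to reduce the conjecture to a concrete construction and then attempt it in the spirit of Theorem~\ref{Theorem Main}. Write $m := k\exp(G)$ and $n := \Omega(s_m(G))$. Theorem~\ref{Theorem Main} forces both Ramsey numbers into $\{n-1, n\}$ with $R(\mathcal{I}_m^{(r)}, G) \leq R(\mathcal{S}_m^{(r)}, G)$, so the conjecture is equivalent to the reverse inequality; concretely, whenever $R(\mathcal{S}_m^{(r)}, G) = n$, one must exhibit a single $G$-coloring of $K_{n-1}^{(r)}$ avoiding a zero-sum copy of any intersecting family of size $m$.

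The first step is to dispose of the easy range using Erd\H{o}s-Ko-Rado. Under the hypothesis $r \leq (n-1)/2$, every intersecting family in $K_{n-1}^{(r)}$ has at most $\binom{n-2}{r-1}$ edges, so if $m > \binom{n-2}{r-1}$ then no intersecting family of size $m$ exists in $K_{n-1}^{(r)}$ at all, any $G$-coloring trivially works, and the conjecture holds in this subcase. Henceforth one may assume $m \leq \binom{n-2}{r-1}$.

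For this remaining range, the natural attempt is to replicate the Baranyai-type construction from the proof of Theorem~\ref{Theorem Main}. Using Lemma~\ref{Lemma Baranyai}, partition $K_{n-1}^{(r)}$ into hypermatchings $\mathcal{M}_1, \ldots, \mathcal{M}_t$ of near-maximal size $\lfloor (n-1)/r\rfloor$, and color all edges of $\mathcal{M}_i$ by $g_i$, where $(g_1, \ldots, g_t)$ is a sequence in $G$ containing no zero-sum subsequence of length $m$. Since an intersecting family picks at most one edge from each hypermatching, the color-multiset of any $m$-edge intersecting family is a length-$m$ subsequence of $(g_1, \ldots, g_t)$ and hence cannot be zero-sum. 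The construction succeeds precisely when $t < s_m(G)$.

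The main obstacle, and the reason this stops short of a full proof, is the case $r \nmid n-1$: here the minimum number of hypermatchings needed to partition $K_{n-1}^{(r)}$ equals $\lceil \binom{n-1}{r}/\lfloor (n-1)/r \rfloor\rceil$, which strictly exceeds $\binom{n-2}{r-1}$, while the only lower bound on $s_m(G)$ immediately supplied by the definition of $n$ is $s_m(G) \geq \binom{n-2}{r-1}+1$. To close this gap one might allow some pieces of the decomposition to be not pure matchings but matchings augmented by one carefully placed extra edge, exploiting the EKR regime (via a Hilton-Milner type stability result) so that no size-$m$ intersecting family can grab more than one edge per piece; alternatively, one might derive sharper lower bounds on $s_m(G)$ under the hypothesis $r \leq (n-1)/2$ by analyzing the extremal sequences realizing $s_m(G)-1$. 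Verifying that some such refinement yields a decomposition into strictly fewer than $s_m(G)$ pieces across all remaining parameter ranges is the step I expect to be genuinely hard.
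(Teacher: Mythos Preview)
This statement is posed in the paper as a \emph{conjecture}, not a theorem; the paper offers no proof. So there is no benchmark argument to compare your attempt against.

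Your reduction is sound: by Theorem~\ref{Theorem Main} both invariants lie in $\{n-1,n\}$ with $R(\mathcal{I}_m^{(r)},G)\le R(\mathcal{S}_m^{(r)},G)$, so the only obstruction to equality is the configuration $R(\mathcal{I}_m^{(r)},G)=n-1$, $R(\mathcal{S}_m^{(r)},G)=n$. Your Erd\H{o}s--Ko--Rado observation is a genuine step beyond what the paper records: under the hypothesis $r\le (n-1)/2$, if $m>\binom{n-2}{r-1}$ then $K_{n-1}^{(r)}$ contains no intersecting family of size $m$ whatsoever, so $R(\mathcal{I}_m^{(r)},G)=n$ automatically and the conjecture holds in that subcase. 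This subcase is nonvacuous, since by minimality of $n$ one has $\binom{n-2}{r-1}<s_m(G)$, while $s_m(G)$ can well exceed $m$. This is precisely the connection with the Erd\H{o}s--Ko--Rado theorem that the paper alludes to in motivating the conjecture.

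For the remaining range $m\le\binom{n-2}{r-1}$ you correctly locate the obstacle: the Baranyai decomposition of $K_{n-1}^{(r)}$ into matchings needs $\bigl\lceil\binom{n-1}{r}\big/\lfloor(n-1)/r\rfloor\bigr\rceil$ parts, which can exceed $\binom{n-2}{r-1}$ when $r\nmid n-1$, and the definition of $n$ only guarantees $s_m(G)\ge\binom{n-2}{r-1}+1$. You are right that closing this gap is the substantive open step. One remark worth making: your framing conditions on $R(\mathcal{S}_m^{(r)},G)=n$, which in principle hands you a witness coloring of $K_{n-1}^{(r)}$ with no zero-sum hyperstar of size $m$; your Baranyai attempt ignores this witness entirely and tries to build a coloring from scratch. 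Starting from that witness and arguing that it already avoids zero-sum intersecting families (perhaps via Hilton--Milner stability, as you suggest) may be a more promising line than refining the decomposition. In any case, what you have written is an honest partial analysis of an open problem rather than a proof, and you present it as such.
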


\bigskip

We close this paper by suggesting a direction for successive researches.
Erd\H{o}s, Chao-Ko and Rado in their original EKR paper (see Theorem 2 of \cite{EKR}) also made a study of the $t$-intersecting family, for which any pair of $r$-uniform edges have an intersection of cardinality at least $t$. The $t$-intersecting family has been studied and generalized in many papers (see \cite{EKR-AhlKha96,EKR-AhlKha,Friedgut08,Tokushige08} for example).
Hence, it would be interesting to study the zero-sum Ramsey number for $t$-intersecting family.

\bigskip

\noindent {\bf Acknowledgements}

\noindent The authors are grateful to Professor Weidong Gao for suggesting this problem and for his helpful discussions.

\end{document}